\newtheorem{thm}{Theorem}
\newtheorem{lem}[thm]{Lemma}
\newtheorem{prop}[thm]{Proposition}
\newtheorem{assert}[thm]{Assertion}
\newtheorem{remarks}[thm]{Remark}
\newtheorem{definition}[thm]{Definition}
\newtheorem{exl}[thm]{Example}
\numberwithin{thm}{section}
\newcommand{\adj}{\leftrightarrow}
\newcommand{\adjeq}{\leftrightarroweq}
\DeclareMathOperator{\id}{id}
\DeclareMathOperator{\Fix}{Fix}
\def\Z{{\mathbb Z}}
\def\N{{\mathbb N}}
\def\R{{\mathbb R}}
\begin{document}
\title{Remarks on Fixed Point Assertions in Digital Topology, 8}
\author{Laurence Boxer
\thanks{Department of Computer and Information Sciences, Niagara University, NY 14109, USA
and  \newline
Department of Computer Science and Engineering, State University of New York at Buffalo \newline
email: boxer@niagara.edu
\newline
ORCID: 0000-0001-7905-9643
}
}

\date{ }
\maketitle

\begin{abstract}
    This paper continues a series in which we study
    deficiencies in previously published works
    concerning fixed point assertions for digital images.

MSC: 54H25

Key words and phrases: digital topology, digital image,
fixed point, digital metric space
\end{abstract}

\section{Introduction}
There are many beautiful results concerning fixed points
for digital images. There are also many highly flawed
papers concerning this topic. The current work
continues that
of~\cite{BxSt19, Bx19, Bx19-3, Bx20, Bx22, BxBad6, BxBad7}
in discussing flaws in papers that have come to our
attention since acceptance of~\cite{BxBad7} for
publication.

In particular, the notion of
a ``digital metric space" has led many authors to attempt,
in most cases either erroneously or trivially, to modify fixed
point results for Euclidean spaces to digital images. 
This notion contains roots of all the
flawed papers studied in the current paper.
See~\cite{Bx20} for discussion of why ``digital metric space"
does not seem a worthy topic of further research.

\section{Preliminaries}
Much of the material in this section is quoted or
paraphrased from~\cite{Bx20}.

We use $\N$ to represent the natural numbers,
$\Z$ to represent the integers. 

A {\em digital image} is a pair $(X,\kappa)$, where $X \subset \Z^n$ 
for some positive integer $n$, and $\kappa$ is an adjacency relation on $X$. 
Thus, a digital image is a graph.
In order to model the ``real world," we usually take $X$ to be finite,
although there are several papers that consider
infinite digital images, e.g., for digital analogs of
covering spaces. The points of $X$ may be 
thought of as the ``black points" or foreground of a 
binary, monochrome ``digital picture," and the 
points of $\Z^n \setminus X$ as the ``white points"
or background of the digital picture.

\subsection{Adjacencies, 
continuity, fixed point}

In a digital image $(X,\kappa)$, if
$x,y \in X$, we use the notation
$x \adj_{\kappa}y$ to
mean $x$ and $y$ are $\kappa$-adjacent; we may write
$x \adj y$ when $\kappa$ can be understood. 
We write $x \adjeq_{\kappa}y$, or $x \adjeq y$
when $\kappa$ can be understood, to
mean 
$x \adj_{\kappa}y$ or $x=y$.

The most commonly used adjacencies in the study of digital images 
are the $c_u$ adjacencies. These are defined as follows.
\begin{definition}
\label{cu-adj-Def}
Let $X \subset \Z^n$. Let $u \in \Z$, $1 \le u \le n$. Let 
$x=(x_1, \ldots, x_n),~y=(y_1,\ldots,y_n) \in X$. Then $x \adj_{c_u} y$ if 
\begin{itemize}
    \item $x \neq y$,
    \item for at most $u$ distinct indices~$i$,
    $|x_i - y_i| = 1$, and
    \item for all indices $j$ such that $|x_j - y_j| \neq 1$ we have $x_j=y_j$.
\end{itemize}
\end{definition}

\begin{definition}
\label{path}
(See {\rm \cite{Khalimsky}}) 
    Let $(X,\kappa)$ be a digital image. Let
    $x,y \in X$. Suppose there is a set
    $P = \{x_i\}_{i=0}^n \subset X$ such that
$x=x_0$, $x_i \adj_{\kappa} x_{i+1}$ for
$0 \le i < n$, and $x_n=y$. Then $P$ is a
{\em $\kappa$-path} (or just a {\em path}
when $\kappa$ is understood) in $X$ from $x$ to $y$,
and $n$ is the {\em length} of this path.
\end{definition}

\begin{definition}
{\rm \cite{Rosenfeld}}
A digital image $(X,\kappa)$ is
{\em $\kappa$-connected}, or just {\em connected} when
$\kappa$ is understood, if given $x,y \in X$ there
is a $\kappa$-path in $X$ from $x$ to $y$.
\end{definition}

\begin{definition}
{\rm \cite{Rosenfeld, Bx99}}
Let $(X,\kappa)$ and $(Y,\lambda)$ be digital
images. A function $f: X \to Y$ is 
{\em $(\kappa,\lambda)$-continuous}, or
{\em $\kappa$-continuous} if $(X,\kappa)=(Y,\lambda)$, or
{\em digitally continuous} when $\kappa$ and
$\lambda$ are understood, if for every
$\kappa$-connected subset $X'$ of $X$,
$f(X')$ is a $\lambda$-connected subset of $Y$.
\end{definition}

\begin{thm}
{\rm \cite{Bx99}}
A function $f: X \to Y$ between digital images
$(X,\kappa)$ and $(Y,\lambda)$ is
$(\kappa,\lambda)$-continuous if and only if for
every $x,y \in X$, if $x \adj_{\kappa} y$ then
$f(x) \adjeq_{\lambda} f(y)$.
\end{thm}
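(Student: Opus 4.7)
The plan is to prove both directions directly from the definitions, using the fact that a two-point set $\{x,y\}$ with $x \adj_\kappa y$ is itself a $\kappa$-connected subset of $X$.

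For the forward implication, I would assume $f$ is $(\kappa,\lambda)$-continuous and take any $x,y \in X$ with $x \adj_\kappa y$. The set $X' = \{x,y\}$ is $\kappa$-connected, since $x,x_1=y$ is a $\kappa$-path of length $1$ from $x$ to $y$. By continuity, $f(X') = \{f(x), f(y)\}$ is $\lambda$-connected in $Y$. If $f(x) = f(y)$, the conclusion $f(x) \adjeq_\lambda f(y)$ holds trivially. Otherwise, $f(X')$ is a $\lambda$-connected set containing two distinct points, so there is a $\lambda$-path from $f(x)$ to $f(y)$ inside $\{f(x),f(y)\}$; such a path must use the edge $f(x) \adj_\lambda f(y)$. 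In either case $f(x) \adjeq_\lambda f(y)$.

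For the reverse implication, I would assume the adjacency-preservation property and take any $\kappa$-connected subset $X' \subseteq X$. To show $f(X')$ is $\lambda$-connected, pick arbitrary $f(x), f(y) \in f(X')$ with $x,y \in X'$, and use $\kappa$-connectedness of $X'$ to obtain a $\kappa$-path $x=x_0, x_1, \ldots, x_n=y$ in $X'$. Applying the hypothesis to each consecutive pair yields $f(x_i) \adjeq_\lambda f(x_{i+1})$ for all $i$. Deleting consecutive duplicates from the sequence $f(x_0), \ldots, f(x_n)$ then produces a genuine $\lambda$-path in $f(X')$ from $f(x)$ to $f(y)$, establishing $\lambda$-connectedness.

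The only mildly delicate point, and the only place where any care is needed, is the bookkeeping in the reverse direction: the sequence $f(x_0), \ldots, f(x_n)$ is not automatically a $\lambda$-path in the sense of Definition~\ref{path}, because consecutive terms may be equal rather than $\lambda$-adjacent. Collapsing equal consecutive terms (or, equivalently, arguing that $\adjeq_\lambda$ transports connectedness along the indexing interval) resolves this. Once that observation is in place, both directions are immediate, and no additional hypotheses on $X$, $Y$, or the adjacencies are needed.
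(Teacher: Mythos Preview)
Your argument is correct and is essentially the standard proof of this characterization. Note, however, that the paper itself does not supply a proof of this theorem; it is merely stated and attributed to~\cite{Bx99} as background material in the preliminaries. So there is no ``paper's own proof'' to compare against --- your write-up simply fills in the omitted details, and does so in the expected way.
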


\begin{remarks}
    For $x,y \in X$, $P = \{x_i\}_{i=0}^n \subset X$
is a $\kappa$-path from $x$ to $y$ if and only if
$f: [0,n]_{\Z} \to X$, given by $f(i)=x_i$, is
$(c_1,\kappa)$-continuous. Therefore, we may also
call such a function $f$ a {\em ($\kappa$-)path}
in $X$ from $x$ to $y$.
\end{remarks}

We use $\id_X$ to denote the identity function on $X$, 
and $C(X,\kappa)$ for the set of functions 
$f: X \to X$ that are $\kappa$-continuous.

A {\em fixed point} of a function $f: X \to X$ 
is a point $x \in X$ such that $f(x) = x$. We denote by
$\Fix(f)$ the set of fixed points of $f: X \to X$.

Let $X = \Pi_{i=1}^n X_i$. The 
{\em projection to the
$j^{th}$ coordinate}
function $p_j: X \to X_j$
is the function defined
for $x = (x_1, \ldots, x_n) \in X$, $x_i \in X_i$, by $p_j(x) = x_j$.

As a convenience, if $x$ is a point in the
domain of a function $f$, we will often
abbreviate ``$f(x)$" as ``$fx$".

\subsection{Digital metric spaces}
\label{DigMetSp}
A {\em digital metric space}~\cite{EgeKaraca15} is a triple
$(X,d,\kappa)$, where $(X,\kappa)$ is a digital image and $d$ is a metric on $X$. The
metric is usually taken to be the Euclidean
metric or some other $\ell_p$ metric; 
alternately, $d$ might be taken to be the
shortest path metric. These are defined
as follows.
\begin{itemize}
    \item Given 
          $x = (x_1, \ldots, x_n) \in \Z^n$,
          $y = (y_1, \ldots, y_n) \in \Z^n$,
          $p > 0$, $d$ is the $\ell_p$ metric
          if \[ d(x,y) =
          \left ( \sum_{i=1}^n
          \mid x_i - y_i \mid ^ p
          \right ) ^ {1/p}. \]
          Note the special cases: if $p=1$ we
          have the {\em Manhattan metric}; if
          $p=2$ we have the 
          {\em Euclidean metric}.
    \item \cite{ChartTian} If $(X,\kappa)$ is a 
          connected digital image, 
          $d$ is the {\em shortest path metric}
          if for $x,y \in X$, $d(x,y)$ is the 
          length of a shortest
          $\kappa$-path in $X$ from $x$ to $y$.
\end{itemize}

We say a metric space $(X,d)$ is {\em uniformly discrete}
if there exists $\varepsilon > 0$ such that
$x,y \in X$ and $d(x,y) < \varepsilon$ implies $x=y$.

\begin{remarks}
\label{unifDiscrete}
If $X$ is finite or  
\begin{itemize}
\item {\rm \cite{Bx19-3}}
$d$ is an $\ell_p$ metric, or
\item $(X,\kappa)$ is connected and $d$ is 
the shortest path metric,
\end{itemize}
then $(X,d)$ is uniformly discrete.

For an example of a digital metric space
that is not uniformly discrete, see
Example~2.10 of~{\rm \cite{Bx20}}.
\end{remarks}

We say a sequence $\{x_n\}_{n=0}^{\infty}$ is 
{\em eventually constant} if for some $m>0$, 
$n>m$ implies $x_n=x_m$.
The notions of convergent sequence and complete digital metric space are often trivial, 
e.g., if the digital image is uniformly 
discrete, as noted in the following, a minor 
generalization of results 
of~\cite{HanBanach,BxSt19}.

\begin{prop}
\label{eventuallyConst}
{\rm \cite{Bx20}}
Let $(X,d)$ be a metric space. 
If $(X,d)$ is uniformly discrete,
then any Cauchy sequence in $X$
is eventually constant, and $(X,d)$ is a complete metric space.
\end{prop}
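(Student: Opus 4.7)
The plan is to unwind the definitions. Let $\{x_n\}_{n=0}^{\infty}$ be a Cauchy sequence in $X$, and let $\varepsilon > 0$ be the constant provided by uniform discreteness, so that $d(x,y) < \varepsilon$ forces $x = y$. By the Cauchy property applied to this particular $\varepsilon$, there exists an index $m$ such that for all $n, n' \geq m$ we have $d(x_n, x_{n'}) < \varepsilon$.

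Next, I would specialize to $n' = m$ and invoke uniform discreteness: the inequality $d(x_n, x_m) < \varepsilon$ forces $x_n = x_m$ for every $n \geq m$. This is exactly the statement that $\{x_n\}$ is eventually constant in the sense defined just before the proposition.

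For the second assertion, I would note that any eventually constant sequence is automatically convergent, with limit equal to its eventual value; in our situation, $x_n \to x_m \in X$. Since every Cauchy sequence in $(X,d)$ therefore has a limit in $X$, the metric space $(X,d)$ is complete.

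There is essentially no obstacle: the whole argument is a one-line application of the uniform discreteness hypothesis to the Cauchy condition, followed by the trivial observation that constant tails converge. The only thing to be careful about is that the eventual value $x_m$ actually lies in $X$, which is automatic since every term of the sequence does.
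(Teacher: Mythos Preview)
Your proof is correct and is the standard, essentially unique argument for this fact. Note that the paper does not actually supply its own proof of this proposition; it is quoted as a known result from~\cite{Bx20}, so there is nothing to compare against beyond observing that your argument is exactly what one would expect.
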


\section{Iterations
of~\cite{BotmartEtal}}
The paper~\cite{BotmartEtal}
is concerned with 
comparing the rates
of convergence of
convergent sequences
in a digital image,
perhaps especially
of sequences converging
to a fixed point of
a given function.
This is a problem
of greater theoretical than
practical interest,
as in the ``real
world," a digital
image is finite and,
usually, of small to
moderate size. Also,
the paper is flawed
as we discuss below.

\begin{itemize}
    \item In Definition~2.5 
of~\cite{BotmartEtal},
a digital metric
space $(E,\mu)$, a
function $T: E \to E$,
and a sequence 
$\{\alpha_n\}_{n=0}^\infty$, where
$0 \le \alpha_n \le 1$ are hypothesized.
Statement~(2.2) of
this definition calls for
\[ x_{n+1} =
   f_{T,\alpha_n}(x_n)
\]
where nothing appears
to define or 
describe~$f_{T,\alpha_n}$. Further,
the definition proceeds with a
subdefinition,
\[ \epsilon_n = 
\mu(x_{n+1}, f_{T,\alpha_n}(x_n))
\]
where $\mu$ is a metric,
so, by the above, we would have
$\epsilon_n = 0$.
It seems unlikely
that this is what
the authors intended.
\item The assertion of~\cite{BotmartEtal}
labeled as Theorem~4.1 uses ``$F(T)$" without
definition in its hypothesis. Its usage suggests
this is intended to be ``$F_T$", defined
earlier as the fixed point set of
the function $T$.
\item The argument
offered as proof of
Theorem~4.1 uses the
symbol~``$\delta$", 
seemingly as a 
nonnegative real number. As
this ``proof"
proceeds, it seems
that $\delta$ is
required to be
in the interval
$[0,1)$, but this
restriction is never
stated. What appears
to be the same
``$\delta$" appears
in the arguments
given as proofs of
the assertions labeled
Theorem~4.2 and
Theorem~4.3.
\item The paper's
assertion labeled as
Theorem~4.2 also 
depends on its
Definition~2.5,
which, as discussed 
above, is dubious.
\item The paper's
Example~4.4 
assumes the function
\[T(x) = x/2 + 3
\] 
is defined from $X$
to $X$, where $X$
is the set of
non-negative integers. 
Since, e.g.,
$T(1)=3.5$ is 
non-integral,
the formula given
for $T$ does not
belong in a discussion of
functions from~$X$
to~$X$.
\item Example~4.5 is
similarly flawed,
hypothesizing the
function 
\[ T(x) = \sqrt{x^2 - 8x + 40}
\]
as a function from~$X$ to~$X$ 
where~$X$ is the set
of nonnegative
integers. But, e.g.,
$T(1)=\sqrt{33}$ is
not an integer, so
the formula given
for $T$ does not
belong in a discussion of
functions from~$X$
to~$X$.
\end{itemize}

Perhaps appropriate rewriting can
yield valid results out of the assertions
of~\cite{BotmartEtal}. However, as written, 
none of the
``Main Results" of
this paper can be
regarded as all of
well defined, proven,
and valid.

\section{Contractive type mappings of~\cite{GuptaEtal}}
The paper~\cite{GuptaEtal} is concerned with
fixed point results
for self-maps $T$ on
digital images such that $T$ is a
$\theta$-contraction 
(defined below).

We note~\cite{GuptaEtal} uses
``$\ell$-adjacent" for what our
Definition~\ref{cu-adj-Def} would call
``$c_{\ell}-adjacent$".

\subsection{Improper citations}
Improper citations
exist
in~\cite{GuptaEtal}:
attributions to~\cite{HanBanach}
that should be
to~\cite{EgeKaraca15},
of the definition of
{\em digital metric space} and the 
definition of a {\em digital contraction
map} (not to be 
confused with
the ``contraction"
that is a digital
homotopy between an
identity map and a
constant map). Since~\cite{EgeKaraca15} is a reference
in~\cite{GuptaEtal},
the authors of the
latter should have
known better.

\subsection{\cite{GuptaEtal}'s Theorem~3.1}
\begin{definition}
    {\rm \cite{GuptaEtal}}
    $\Theta = \{
    \theta: [0,\infty) \to
    [0,\infty) \mid
    \theta$ is increasing,
    $\theta(t) < \sqrt{t}$ for $t>0$, $\theta(t)=0$
    if and only if
    $t=0 \}$.
\end{definition}

\begin{definition}
    {\rm \cite{GuptaEtal}}
    Suppose $(X,d,\ell)$ is
    a digital metric
    space, $T: X \to X$, and
    $\theta \in \Theta$. Suppose
    $d(Tx,Ty) \le
    \theta(d(x,y))$
    for all $x,y \in X$. Then $T$
    is called a
    $\theta${\em -digital
    contraction}.
\end{definition}

The following is Theorem~3.1
of~\cite{GuptaEtal}.

\begin{thm}
\label{Gupta3.1}
    Suppose $(X,d,\ell)$ is a digital metric
    space and $T: X \to X$ is
    a digital $\theta$-contraction for
    some $\theta \in \Theta$.
    Then $T$ has a unique fixed point.
\end{thm}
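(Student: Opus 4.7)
The plan is Picard iteration, exploiting the discreteness of the digital metric to turn the weak bound $\theta(t) < \sqrt{t}$ into a finite-step argument. Fix any $x_0 \in X$, set $x_{n+1} = Tx_n$, and let $a_n = d(x_{n+1}, x_n)$. The $\theta$-contraction hypothesis gives $a_n \le \theta(a_{n-1}) < \sqrt{a_{n-1}}$ whenever $a_{n-1} > 0$. Under the standard choices from Remark~\ref{unifDiscrete} (an $\ell_p$ metric on $X \subset \Z^n$ or the shortest path metric on a connected $X$), positive distances in $X$ are at least $1$, so $\sqrt{a_{n-1}} \le a_{n-1}$ and $\{a_n\}$ strictly decreases as long as it remains positive.

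The heart of the argument is that this strict decrease cannot continue forever. The set of positive distances realized in $X$ has only finitely many values in any bounded interval (since $X \subset \Z^n$), so a strictly decreasing sequence drawn from this set is necessarily finite. Hence some $a_N = 0$, which means $x_N = x_{N+1} = Tx_N$, so $x_N \in \Fix(T)$. Uniqueness is by the same mechanism applied directly: if $a, b \in \Fix(T)$, then $d(a,b) \le \theta(d(a,b)) < \sqrt{d(a,b)}$ forces $d(a,b) < 1$, hence $d(a,b) = 0$.

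The main obstacle, and the point worth flagging, is that the bound $\theta(t) < \sqrt{t}$ is only genuinely contractive for $t \ge 1$; for $0 < t < 1$ one has $\sqrt{t} > t$, and the hypothesis does not even prevent $\theta(t) > t$. Thus the argument really relies on the implicit assumption that positive distances in $X$ are bounded below by $1$, which is automatic for the $\ell_p$ and shortest path metrics referenced above but is not spelled out in \cite{GuptaEtal}'s statement of Theorem~\ref{Gupta3.1}. It is also worth noting that in the $c_1$-connected $\ell_p$ setting the theorem collapses to triviality: $c_1$-adjacent points have distance~$1$, and $\theta(1) < 1$ forces $Tx = Ty$ on such pairs, so $T$ must be constant on $X$ and the unique fixed point is immediate.
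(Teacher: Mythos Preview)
The paper does not supply its own proof of Theorem~\ref{Gupta3.1}; the theorem is merely quoted from~\cite{GuptaEtal} as background, and the paper's actual contribution in that subsection is the observation (via the Proposition from~\cite{BxBad7}) that under the standard $c_1$/$\ell_p$ or shortest-path hypotheses every $\theta$-contraction is constant, so the result collapses to a triviality. Your final paragraph reproduces exactly that observation, so on the point the paper cares about you are fully aligned.

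Regarding the existence/uniqueness argument you offer: it is sound \emph{under the extra hypothesis you yourself flag}, namely that positive distances in $X$ are at least~$1$. Without that assumption (i.e., for an arbitrary metric $d$, which the statement as written permits), the chain $a_n \le \theta(a_{n-1}) < \sqrt{a_{n-1}} \le a_{n-1}$ breaks for $a_{n-1}\in(0,1)$, and neither the strict decrease of $\{a_n\}$ nor the uniqueness step ``$d(a,b)<1$ hence $d(a,b)=0$'' goes through. So what you have proved is the theorem for the usual metric choices, not the statement as literally formulated --- and you are right to identify this as an omission in~\cite{GuptaEtal}'s hypotheses. One small point: your claim that the set of positive distances has only finitely many values in any bounded interval is immediate for the shortest-path metric (integer values) and holds for $\ell_p$ on $\Z^n$ because the $p$-th power of any distance is a sum of $n$ nonnegative integers, but it is worth saying explicitly rather than attributing it to ``$X\subset\Z^n$'' alone, since that inclusion by itself does not constrain an arbitrary metric.
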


However, we note important cases for which the
previous theorem reduces to triviality, as a 
consequence of the following.

\begin{prop}
{\rm \cite{BxBad7}}
    Let $(X,d,\kappa)$ be a 
    connected digital metric space in which
    \begin{itemize}
        \item $d$ is the shortest path metric, or
        \item $d$ is any $\ell_p$ metric and
              $\kappa = c_1$.
    \end{itemize} 
 Then every $\theta$-contraction
    on $(X,d)$ is a constant map.
\end{prop}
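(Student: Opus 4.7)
The plan is to exploit the fact that under either set of hypotheses the metric $d$ takes the value $1$ on adjacent pairs and is bounded below by $1$ on all pairs of distinct points. Once this is established, the condition $\theta(t) < \sqrt{t}$ at $t=1$ forces $\theta(1) < 1$, which is incompatible with $d(Tx,Ty) \ge 1$ whenever $Tx \neq Ty$. Thus $T$ collapses any adjacent pair, and connectedness will finish the job.

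First I would verify the two metric facts. In the $\ell_p$/$c_1$ case, if $x,y \in X \subset \Z^n$ are distinct then $\sum_i |x_i - y_i|^p \ge 1$, giving $d(x,y) \ge 1$; and if $x \adj_{c_1} y$, Definition~\ref{cu-adj-Def} yields that $x$ and $y$ differ in exactly one coordinate by $1$, so $d(x,y) = 1$. In the shortest path case, $d(x,y)$ is the length of a shortest $\kappa$-path, a positive integer when $x \neq y$, so $d(x,y) \ge 1$; and $x \adj_\kappa y$ obviously yields $d(x,y)=1$.

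Next I would apply the $\theta$-contraction inequality to an adjacent pair. For $x \adj_\kappa y$,
\[
d(Tx,Ty) \le \theta(d(x,y)) = \theta(1) < \sqrt{1} = 1.
\]
Since $Tx,Ty \in X$ and $d$ on $X$ takes no values strictly between $0$ and $1$ (by the previous step), we must have $d(Tx,Ty) = 0$, i.e., $Tx = Ty$.

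Finally I would promote this local collapsing to a global one using $\kappa$-connectedness. Given arbitrary $a,b \in X$, choose a $\kappa$-path $\{x_i\}_{i=0}^n$ from $a$ to $b$; the previous step gives $T(x_i) = T(x_{i+1})$ for each $i$, hence $T(a) = T(b)$, and $T$ is constant. The only mildly subtle point — and the one I would make sure to state explicitly — is the metric fact that no positive value of $d$ lies below $1$; the rest is a one-line application of the definitions. Note that continuity of $T$ is not needed, since the contraction hypothesis alone forces $T$ to identify adjacent points.
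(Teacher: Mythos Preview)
Your argument is correct and matches the paper's approach: although the proposition is here cited from~\cite{BxBad7} without proof, the paper proves the identical statement later (in the section on~\cite{Pal}'s Theorem~3.1) via exactly your route---observe $d(x,y)=1$ for adjacent pairs, apply $d(Tx,Ty)\le\theta(1)<\sqrt{1}=1$ to force $Tx=Ty$, and conclude by connectedness. Your write-up is simply more explicit about why $d$ admits no values in $(0,1)$, which the paper leaves implicit.
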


\subsection{~\cite{GuptaEtal}'s Corollary 3.1}
Corollary~3.1
of~\cite{GuptaEtal}
gives a version of
the Banach Contraction Principle (defined below) for digital images. This
was previously
shown in~\cite{EgeKaraca-Ban}, which is a reference 
of~\cite{GuptaEtal},
so the authors of~\cite{GuptaEtal}
should have cited~\cite{EgeKaraca-Ban}.

Further, there are
important cases
for which the Banach
contraction principle for
digital images is a
triviality.

\begin{definition}
{\rm \cite{EgeKaraca-Ban}}
\label{digContractionMap}
    Let $(X,d,\kappa)$
    be a digital
    image. Let
    $0 \le \lambda < 1$ and suppose
    $T: X \to X$
    such that for all $x \in X$,
    $d(Tx,Ty) \le \lambda d(x,y)$. Then $T$
    is a {\em digital contraction map}.
\end{definition}

The Banach
contraction principle for
digital images is
the following.
\begin{thm}
    {\rm \cite{EgeKaraca-Ban,GuptaEtal}}
    Let $(X,d,\kappa)$
    be a digital
    image. Let
    $T: X \to X$
    be a digital 
    contraction map.
    Then $T$ has a
    unique fixed point.
\end{thm}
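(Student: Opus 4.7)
The plan is to follow the classical Banach fixed-point argument, specialized to the digital metric space setting. I would pick any basepoint $x_0 \in X$ and form the Picard iteration $x_{n+1} = T(x_n)$. Applying the contraction inequality $d(Tx,Ty) \le \lambda d(x,y)$ inductively gives $d(x_{n+1},x_n) \le \lambda^n d(x_1,x_0)$, and a triangle-inequality telescoping then yields, for $m > n$,
\[
  d(x_m,x_n) \le \sum_{k=n}^{m-1} \lambda^k d(x_1,x_0) \le \frac{\lambda^n}{1-\lambda}\, d(x_1,x_0),
\]
which tends to $0$ as $n \to \infty$. Thus $\{x_n\}$ is a Cauchy sequence in $(X,d)$.

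Next I would produce a fixed point by passing to a limit. Although the theorem does not explicitly hypothesize completeness, in the standard digital settings Remark~\ref{unifDiscrete} tells us $(X,d)$ is uniformly discrete, so Proposition~\ref{eventuallyConst} applies and the Cauchy sequence $\{x_n\}$ is in fact eventually constant: there exists $m$ with $x_{m+1}=x_m$, i.e., $T(x_m)=x_m$. (Under a general completeness hypothesis one would instead put $x^{*}=\lim x_n$ and verify $T(x^{*})=x^{*}$ from $d(Tx^{*},x^{*}) \le \lambda d(x^{*},x_n) + d(x_{n+1},x^{*}) \to 0$.) For uniqueness, if $p$ and $q$ are any two fixed points then $d(p,q) = d(Tp,Tq) \le \lambda d(p,q)$, and since $\lambda < 1$ this forces $d(p,q)=0$, hence $p=q$.

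The main obstacle, such as it is, lies not in the arithmetic but in the implicit completeness clause. The argument is essentially automatic once completeness is granted, and in the usual digital metric settings it collapses to the observation that a contractive Picard iteration must stabilize after finitely many steps---echoing the paper's broader point that the \emph{digital contraction} framework becomes trivial in precisely those cases where it has typically been invoked.
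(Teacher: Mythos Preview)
The paper does not supply its own proof of this theorem; it is quoted as a cited result from \cite{EgeKaraca-Ban,GuptaEtal} and then immediately followed by Theorem~\ref{contractionTriv}, which shows the hypothesis forces $T$ to be constant in the standard settings. So there is nothing in the paper to compare your argument against line by line.

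That said, your proof is the classical Banach argument and is correct. You are also right to flag the completeness issue: the theorem as stated in the paper says only ``digital image,'' not ``complete digital metric space,'' whereas the sources it cites do assume completeness. Your workaround---invoking Remark~\ref{unifDiscrete} and Proposition~\ref{eventuallyConst} so that the Cauchy sequence is eventually constant in the usual settings---is exactly the device the paper itself uses repeatedly (e.g., in the proofs of Theorems~\ref{correctedJain3.2} and~\ref{betterJyotiRani3.3}) to dispense with both completeness and continuity hypotheses. In that sense your write-up is more careful than the bare statement being quoted, and it dovetails with the paper's recurring theme that in uniformly discrete digital metric spaces such iterative fixed-point arguments collapse to a finite stabilization.
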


However, we have
the following.

\begin{thm}
\label{contractionTriv}
    Let $T: X \to X$
    be a contraction
    map on a connected digital image 
    $(X,d,\kappa)$.
    If $\kappa = c_1$ and
    $d$ is an $\ell_p$ 
    metric, or if $d$ is the
    shortest path
    metric, then
    $T$ is a constant map.
\end{thm}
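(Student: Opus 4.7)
The plan is to exploit the fact that, in both of the stated settings, adjacent points are exactly at distance~$1$, and any two distinct points of $X$ are separated by distance at least $1$. Once these two observations are in hand, the contraction inequality collapses adjacent points to a single image, and connectedness propagates this to all of~$X$.

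First, I would verify that if $x \adj_{\kappa} y$ in $X$, then $d(x,y) = 1$. For the shortest path metric this is immediate: the one-step sequence $(x,y)$ is a $\kappa$-path of length~$1$, and since $x \neq y$ no shorter path exists. For the $\ell_p$ metric with $\kappa = c_1$, adjacency means $x$ and $y$ differ in exactly one coordinate by~$1$, whence $d(x,y) = 1$.

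Next, I would show that $d(Tx, Ty) < 1$ forces $Tx = Ty$. For the shortest path metric the distance takes values in $\N \cup \{0\}$ (it is the length of an integer-valued path), so any value less than~$1$ must be~$0$. For the $\ell_p$ metric on $\Z^n$, if $Tx \neq Ty$ then they differ in some coordinate by a nonzero integer of absolute value $\geq 1$, forcing $d(Tx,Ty) \geq 1$. Combining these two observations with the contraction hypothesis $d(Tx,Ty) \le \lambda d(x,y)$ and some $\lambda < 1$: whenever $x \adj_{\kappa} y$ we obtain $d(Tx,Ty) \le \lambda < 1$, hence $Tx = Ty$.

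Finally, I would use connectedness to conclude. Given any $a, b \in X$, pick a $\kappa$-path $\{x_i\}_{i=0}^n$ with $x_0 = a$, $x_n = b$. By the previous step, $T(x_i) = T(x_{i+1})$ for each $i$, so $T(a) = T(b)$. Since $a, b$ were arbitrary, $T$ is constant. The only step requiring any care is the second one, where one must notice that in both listed cases the metric separates distinct points of $X$ by at least~$1$; this is the substantive observation, and everything else is routine.
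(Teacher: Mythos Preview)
Your proof is correct and follows essentially the same approach as the paper: show that adjacent points lie at distance~$1$, use the contraction inequality to force their images to coincide (since the metric separates distinct points by at least~$1$), and then propagate via connectedness. The only difference is cosmetic: the paper cites~\cite{BxSt19} for the $\ell_p$/$c_1$ case rather than spelling it out, whereas you give a unified, self-contained argument for both cases.
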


\begin{proof}
For the case
$\kappa = c_1$ and
$d$ is an $\ell_p$ metric,
the assertion was
shown in~\cite{BxSt19}.

Now assume $d$ is the 
shortest path metric, and
let $\lambda$ be as in
Definition~\ref{digContractionMap}. Let $x \adjeq_{\kappa} y$ in~$X$.
Since $d(Tx,Ty) \le \lambda \cdot d(x,y) = \lambda$,
we must have $d(Tx,Ty) = 0$.
Thus $Tx=Ty$, and it follows
from the connectedness of
$(X,\kappa)$ that $T$ is
constant.
\end{proof}

Thus, for the cases discussed
in Theorem~\ref{contractionTriv}, the Banach contraction
principle for digital images
is a triviality.

\subsection{\cite{GuptaEtal}'s Theorem 3.2}
\label{secGupta3.2}
The assertion labeled Theorem~3.2 of~\cite{GuptaEtal} is
presented with a proof that
contains a major error. The
assertion is as follows.

\begin{assert}
{\rm \cite{GuptaEtal}}
\label{Gupta3.2}
    Let $(X,d,\ell)$ be a
    digital metric space and
    $T: X \to X$ such that
    $x \neq y$ implies
    $d(Tx,Ty) < \mu(x,y)$,
    where
    \[ \mu(x,y) = \max
    \left \{
    \begin{array}{c}
      \frac{1}{2} \left [
      d(y,Ty) \frac{1+d(x,Tx)}{1+d(x,y)} +
      d(Tx,Ty) + d(x,y) \right ] , \\
    d(x,Tx) \frac{1 + d(y,Ty)}{1+d(Tx,Ty)}
    \end{array}
    \right \} .
    \]
    Then $T$ has unique fixed point.
\end{assert}

In order to prove the
existence of a fixed point,
the authors define an infinite sequence of points
of~$X$ via $x_0 \in X$, 
$x_{n+1}=Tx_n$. They
attempt to show that if
the $x_n$ are all distinct 
then $\{d(x_n,x_{n+1})\}_{n=0}^{\infty}$ is a decreasing sequence. However, a chain
of equations and inequalities begins by claiming equality between
$d(x_n,x_{n+1})$ and
$d(x_{n-1},x_n)$. 
There is no obvious reason to
accept this alleged
equality, and note if true,
it would be counter to the goal of showing a decreasing sequence. The equation
should be
\[ d(x_n,x_{n+1})=
d(Tx_{n-1},Tx_n)\]
which would make the
next few lines of the
``proof" correct.

However, right side 
of the inequality
\[ \max \left \{
\begin{array}{c}
\frac{1}{2} \left [
d(x_n,x_{n+1})
\frac{1+d(x_{n-1},x_n)}{1+d(x_{n-1},x_n)} + d(x_n,x_{n+1}) +
d(x_{n-1},x_n)
\frac{1+d(x_n,x_{n+1})}{1+d(x_n,x_{n+1})}
\right ], \\ d(x_{n-1},x_n) \end{array}
\right \}
\]
\[ \le \max \left \{
\frac{1}{2}[d(x_n,x_{n+1}) +
 d(x_{n-1},x_{n+1})],
 d(x_{n-1},x_n)
\right \}
\]
should be
\[ \max \left \{
\frac{1}{2} [ d(x_n,x_{n+1}) + 
d(x_n,x_{n+1}) +
d(x_{n-1},x_n)],
d(x_{n-1},x_n)
\right \} =
\]
\[ \max \{ [
   d(x_n,x_{n+1}) +
   \frac{1}{2}
   d(x_{n-1},x_n)],
d(x_{n-1},x_n) \}.
\]

Thus, the argument
fails to lead to the 
desired upper bound of
$d(x_{n-1},x_n)$
for $d(x_n,x_{n+1})$.
Therefore, the assertion
must be regarded as
unproven.

\section{\cite{Jain18}'s expansive maps}
D. Jain's paper~\cite{Jain18}
is concerned with
fixed point results
for expansive and
related digital maps.

The following definition is
presented without 
citation in~\cite{Jain18}. 
It should be attributed 
to~\cite{JyotiRani}.

\begin{definition}
    Suppose that $(X, d, \kappa)$ is a complete 
    digital metric space and 
    $S: X \to X$ is a mapping.
If $S$ satisfies 
$d(S(x), S(y)) \ge \alpha d(x, y)$ 
for all $x, y \in X$ and 
some $\alpha > 1$, 
then $S$ is a 
{\em digital expansive mapping}.
\end{definition}

\begin{remarks}
\label{discont}
It is easily derived
from a discussion
in~{\rm \cite{Rosenfeld}} -- see also~{\rm \cite{BxSt19}} --
that such a function
need not be 
digitally continuous. 
E.g., consider $S(x)=2x: \Z \to \Z$
with $\alpha = 1.5$,
$\kappa = c_1$. If
$x \adj_{c_1} y$ in~$X$
then $Sx \not \adjeq_{c_1} Sy$.
\end{remarks}

\subsection{\cite{Jain18}'s Theorem 3.2}
The following is
stated as Theorem~3.2
of~\cite{Jain18}.

\begin{assert}
\label{Jain3.2}
    Let $(X,d,\kappa)$
    be a digital metric space.
    Let $S$ be an
    onto continuous
    self map on $X$
    such that
    \[ d(Sx,Sy) \ge
    \alpha \mu(x,y)
    \]
    where $\alpha > 1$ and
    \[ \mu(x,y) =
    \max \left \{
    d(x,y), 
    \frac{d(x,Sx)+d(y,Sy)}{2},
    \frac{d(x,Sy)+d(y,Sx)}{2}
    \right \}.
    \]
    Then $S$ has a
    fixed point.
\end{assert}

This assertion is 
``almost" correct,
in that Jain's argument makes use of unstated
hypotheses, namely
that 
\begin{itemize}
    \item $(X,d,\kappa)$
is complete; and
\item $d$ is a metric (e.g., the
Euclidean metric) for which $x_n \to x$ implies that
for almost all~$n$,
$x_n = x$.
\end{itemize}
It may be that
Jain assumed that
$d$ is the Euclidean metric
(\cite{Jain18} 
does not specify $d$ as a particular metric), which satisfies
both of these
properties. An example of a metric
for a digital image
that fails to have
these properties is
given in Example~2.9
of~\cite{BxSt19}.

Also, the hypothesis of
continuity is both
unclear (Jain might
mean in the classic
$\varepsilon$ -- $\delta$ sense - Jain's probable intent, as hinted in the ``proof"; or
Jain might mean in 
the digital sense (which would be incorrect -- see
Remark~\ref{discont}) and unnecessary.
The argument offered 
in~\cite{Jain18} as proof of
Assertion~\ref{Jain3.2} should also be
corrected as discussed below.

Errors in Jain's 
``proof" are \begin{itemize}
    \item The inequality 
    ``$0 \ge k \mu(x,y)$" at line~6 of page~106 is stated without justification.
    It does not clearly follow from what precedes.
    \item Two lines later, we see the claim that
    ``$S$~is continuous", which had not been 
    hypothesized and we will not assume.
\end{itemize}

Theorem~\ref{correctedJain3.2}
below is a correct 
version of Assertion~\ref{Jain3.2}, in which we
omit the hypotheses
of~$d$ being complete and~$S$
being continuous.
Also, since
$\mu(x,y) \ge d(x,y)$, we
can substitute the latter 
for $\mu(x,y)$.

\begin{thm}
\label{correctedJain3.2}
        Let $(X,d,\kappa)$
    be a digital metric space, where
    $d$ is any
    $\ell_p$ metric
    or the shortest
    path metric.
    Let $S$ be an
    onto self map 
    on $X$ such that
    \[ d(Sx,Sy) \ge
    \alpha d(x,y)
    \]
    where $\alpha > 1$.
    Then $S$ has a
    fixed point.
\end{thm}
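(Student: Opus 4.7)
The plan is to invert $S$ and run a Banach-style iteration, exploiting uniform discreteness to terminate the process in finitely many steps.

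First, I would observe that the expansive inequality together with $\alpha>1$ forces $S$ to be injective: if $Sx=Sy$, then $0=d(Sx,Sy)\ge \alpha\, d(x,y)$ gives $x=y$. Combined with the hypothesis that $S$ is onto, $S$ is a bijection, so $S^{-1}:X\to X$ is well defined. Substituting $u=Sx$, $v=Sy$ into the expansive inequality yields
\[ d(S^{-1}u,S^{-1}v)\le \frac{1}{\alpha}\,d(u,v) \quad \text{for all } u,v\in X,\]
so $S^{-1}$ is a contraction with ratio $1/\alpha<1$.

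Next, I would fix any $x_0\in X$ and set $y_n=S^{-n}(x_0)$. A routine induction from the contraction inequality gives $d(y_n,y_{n+1})\le (1/\alpha)^n\,d(y_0,y_1)$, and summing the resulting geometric series shows that $\{y_n\}_{n=0}^{\infty}$ is Cauchy in $(X,d)$.

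Finally, under either hypothesis on $d$---any $\ell_p$ metric, or the shortest path metric, the latter implicitly requiring $(X,\kappa)$ to be connected---Remarks~\ref{unifDiscrete} guarantees that $(X,d)$ is uniformly discrete. Proposition~\ref{eventuallyConst} then forces the Cauchy sequence $\{y_n\}$ to be eventually constant, so for some $N$ we have $y_N=y_{N+1}=S^{-1}(y_N)$; applying $S$ to both sides gives $S(y_N)=y_N$, the desired fixed point. The only step worth flagging as a (minor) obstacle is recognizing that surjectivity combined with the expansive inequality forces $S$ to be a bijection, which is what lets us trade an expansion problem for a standard contraction problem on a uniformly discrete complete space.
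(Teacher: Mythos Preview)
Your proof is correct and follows essentially the same route as the paper: pick preimages under $S$ iteratively, show the resulting sequence is Cauchy via the geometric bound $d(y_n,y_{n+1})\le(1/\alpha)^n d(y_0,y_1)$, and then invoke Remark~\ref{unifDiscrete} and Proposition~\ref{eventuallyConst} to force eventual constancy and hence a fixed point. The only cosmetic difference is that you first establish injectivity of $S$ so that $S^{-1}$ is a genuine function, whereas the paper simply selects an arbitrary $x_n\in S^{-1}(x_{n-1})$ at each step; your version is slightly tidier but not substantively different.
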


\begin{proof}
Note our choice 
of~$d$ makes
$(X,d,\kappa)$
complete, and
$d(x_n,x_0) \to 0$
implies $x_n=x_0$
for almost all~$n$.

We will modify Jain's argument
as is useful.
Jain's argument
starts with 
$x_0 \in X$. 
Since~$S$ is onto,
there exists 
$x_1 \in S^{-1}(x_0)$, and,
inductively, $x_n \in S^{-1}(x_{n-1})$. 

Jain correctly shows
$\{x_n\}_{n=0}^{\infty}$ is a Cauchy
sequence, although
this can be established 
much more briefly,
as follows.
\[ d(x_{n-1},x_n) =
d(Sx_n, Sx_{n+1})
\ge \alpha  
d(x_n,x_{n+1}).
\]
So
\[ d(x_n,x_{n+m}) \le
d(x_n,x_{n+1}) + 
\cdots + 
d(x_{n+m-1},x_{n+m}) \le
\]
\[
(1/\alpha)^n d(x_0,x_1) + \cdots
+ (1/\alpha)^{n+m-1}d(x_0,x_1) =
\]
\[ (1/\alpha)^n d(x_0,x_1) 
\left [
1 + (1/\alpha) + \cdots
+ (1/\alpha)^{m-1}
\right ]
\to_{n \to \infty} 0.
\]
Hence $\{x_n\}_{n=0}^{\infty}$ is a Cauchy
sequence.

By Remark~\ref{unifDiscrete} and 
Proposition~\ref{eventuallyConst}, there
exists $x \in X$
such that $x_n = x$
for almost all~$n$.
Therefore, for 
some~$n$,
$S(x_{n+1}) = x_n = x_{n+1}$, so
$x_n \in \Fix(S)$.
\end{proof}

We also note the
following.

\begin{thm}
    Let $(X,\kappa)$
    be a finite
    digital image
    of more than
    one point.
    For any metric~$d$, 
    there is no
    function~$S$
    that is as
    described
    in~Theorem~\ref{correctedJain3.2}.
\end{thm}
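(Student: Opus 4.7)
The plan is to exploit the finiteness of $X$ through its diameter. Since $X$ is finite with more than one point, the quantity
\[ D = \max\{ d(x,y) \mid x,y \in X \} \]
is attained at some pair $a,b \in X$, and because $a \neq b$ and $d$ is a metric, $D > 0$.

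Now suppose, for contradiction, that $S: X \to X$ satisfies $d(Sx,Sy) \ge \alpha d(x,y)$ for all $x,y \in X$ with some $\alpha > 1$. Apply this to the pair $(a,b)$: we obtain
\[ d(Sa, Sb) \;\ge\; \alpha D \;>\; D. \]
But $Sa, Sb \in X$, so by the definition of $D$ we have $d(Sa, Sb) \le D$, a contradiction.

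I expect no real obstacle here; the only subtlety is to note that the onto hypothesis of Theorem~\ref{correctedJain3.2} is not even needed for the argument, nor is any assumption about $\kappa$ or about how $d$ interacts with the adjacency -- just finiteness, more than one point, and the expansive inequality. So the result can be stated and proved purely as an observation that an expansive self-map of a finite metric space with at least two points cannot exist.
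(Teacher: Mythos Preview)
Your proof is correct and follows essentially the same diameter argument as the paper: pick a pair realizing the diameter, apply the expansive inequality, and obtain a point pair at distance exceeding the diameter. Your observation that neither the onto hypothesis nor any condition on $\kappa$ or $d$ is actually used matches the paper's argument, which likewise never invokes surjectivity.
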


\begin{proof}
    Since $X$ is
    finite but has more than one point, there
    exist distinct 
    $x_0,x_1 \in X$ such
    that $d(x_0,x_1) = diam_d(X)$.
    Suppose there
    exists $S$ as
    described
    in~Theorem~\ref{correctedJain3.2}.
    Then
    \[ d(Sx_0,Sx_1)
    \ge \alpha \cdot \mu(x_0,x_1) \ge \alpha \cdot d(x_0,x_1) >
    d(x_0,x_1),
    \]
    contrary to our
    choice of $x_0,x_1$.
    The assertion 
    follows. 
\end{proof}

\subsection{\cite{Jain18}'s Theorem 3.3}
The following is stated as
Theorem~3.3 of~\cite{Jain18}.

\begin{assert}
    Let $(X,d,\kappa)$
    be a complete digital metric space and
    let $S: X \to X$ be
    an onto self map that is
    continuous. Suppose~$S$
    satisfies
    \[ d(Sx,Sy) \ge 
       \alpha \mu
    \]
    where $\alpha > 1$ and
    \[ \mu = \max \left \{ 
    d(x,y), 
    \frac{d(x,Sx)+d(y,Sy)}{2}, d(x,Sy), d(y,Sx)
    \right \}
    \]
    then $S$ has a fixed
    point.
\end{assert}

But notice that $\mu$ is greater than or equal to the
expression used for $\mu$ in Assertion~\ref{Jain3.2}. 
Therefore, we can make the same
modifications that we
made to Assertion~\ref{Jain3.2},
which gives us
Theorem~\ref{correctedJain3.2}.

\section{\cite{JyotiRani17}'s $\beta$--$\psi$ contractive maps}
The notion of a
``$\beta$--$\psi$ contractive map"
appears to have originated in~\cite{SametEtAl}
using the name ``$\alpha$--$\psi$ contractive map".
\cite{JyotiRani17} uses both 
``$\alpha$--$\psi$ contractive map" and
``$\beta$--$\psi$ contractive map" for the same notion.

\subsection{Fundamentals of $\beta$--$\psi$ contractive maps}
\begin{definition}
    {\rm \cite{JyotiRani17,SametEtAl}}
    $\Psi$ is the set of
    functions $\psi: [0, \infty) \to [0, \infty)$
    such that 

    i) $\psi$ is nondecreasing, and

    ii) there exist $k_0 \in \N$, $a \in (0,1)$, and a 
    convergent series of 
    nonnegative terms
    $\sum_{k=1}^{\infty} \nu_k$, such that
    \[ \psi^{k+1}(t) \le
       a \psi^k(t) + \nu_k
    \]
    for $k \ge k_0$ and all
    $t \in \R^+$.
\end{definition}

Note \cite{JyotiRani17} does not
define the symbol ``$\R^+$"; according
to~\cite{SametEtAl},
it represents
$[0,\infty)$.

\begin{lem}
    {\rm \cite{SametEtAl}}
    \label{PsiProps}
    $\psi \in \Psi$
    implies
    \begin{enumerate}
        \item For all $t \in \R^+$,
        $\psi^n(t) \to_{n \to \infty} 0$.
        \item $\psi(t) < t$
        for all $t>0$.
        \item $\psi$ is
        continuous at~0.
        \item $\sum_{n=1}^{\infty} \psi^n(t)$ converges, for
        all~$t \in \R^+$.
    \end{enumerate}
\end{lem}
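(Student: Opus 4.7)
The plan is to extract all four properties from a single iterated form of the defining inequality. Starting from $\psi^{k+1}(t) \le a\psi^k(t) + \nu_k$ for $k \ge k_0$, I would iterate $j$ times to obtain the basic estimate
\[
\psi^{k_0+j}(t) \;\le\; a^j \psi^{k_0}(t) \;+\; \sum_{i=0}^{j-1} a^{j-1-i}\,\nu_{k_0+i},
\]
which will be the workhorse throughout. Everything then flows from standard manipulations of this bound plus the assumption that $\psi$ is nondecreasing.

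For item (1), I would show each of the two terms on the right tends to $0$ as $j \to \infty$. The first term goes to $0$ because $0 < a < 1$. For the second (a convolution-type tail), I would split the sum at a large $N$: the piece $\sum_{i=0}^{N-1} a^{j-1-i}\nu_{k_0+i}$ vanishes as $j \to \infty$ for fixed $N$ since it carries a factor $a^{j-1-N}$, while the remainder $\sum_{i=N}^{j-1} a^{j-1-i}\nu_{k_0+i} \le \sum_{i \ge N} \nu_{k_0+i}$ can be made arbitrarily small by taking $N$ large, using convergence of $\sum \nu_k$. Item (2) then follows by contradiction: if $\psi(t) \ge t$ for some $t>0$, then since $\psi$ is nondecreasing we get $\psi^n(t) \ge t > 0$ for all $n$ by induction, contradicting (1).

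For item (3), I would first observe that (2) forces $\psi(0)=0$: otherwise $s := \psi(0) > 0$ and monotonicity gives $\psi(s) \ge \psi(0) = s$, contradicting (2). Then I would set $L := \inf_{t>0} \psi(t) \ge 0$ and argue $L = 0$, for if $L > 0$ then $\psi(t) \ge L$ for every $t>0$ and monotonicity yields $\psi^n(t) \ge L$ for all $n$, again contradicting (1). Given any $\varepsilon > 0$, pick $s > 0$ with $\psi(s) < \varepsilon$; for $0 \le t < s$, monotonicity gives $\psi(t) \le \psi(s) < \varepsilon$, which together with $\psi(0) = 0$ establishes continuity at $0$.

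Item (4) will fall out by summing the iterated bound over $j$ and swapping the order of summation:
\[
\sum_{j=0}^{\infty} \psi^{k_0+j}(t) \;\le\; \psi^{k_0}(t)\sum_{j=0}^{\infty} a^j \;+\; \sum_{i=0}^{\infty} \nu_{k_0+i} \sum_{j=i+1}^{\infty} a^{j-1-i} \;=\; \frac{\psi^{k_0}(t)}{1-a} + \frac{1}{1-a}\sum_{i=0}^{\infty}\nu_{k_0+i},
\]
both of which are finite. Adding the finite initial segment $\sum_{n=1}^{k_0-1}\psi^n(t)$ completes the proof. I expect item (3) to be the only delicate step, because it requires two separate uses of monotonicity together with item (1) to rule out a positive infimum; the other three items are essentially direct consequences of the iterated estimate.
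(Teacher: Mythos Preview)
Your proof is correct. The iterated estimate is derived correctly by induction, and each of the four items follows from it as you describe: the convolution-tail splitting in item~(1) is standard and valid; the contradiction in item~(2) works because monotonicity propagates $\psi(t)\ge t$ to all iterates; in item~(3) both uses of monotonicity (ruling out $\psi(0)>0$ via item~(2), then ruling out a positive infimum via item~(1)) are sound; and the Tonelli swap in item~(4) is justified by nonnegativity.

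There is nothing to compare against, however: the paper does not supply its own proof of this lemma. It is simply quoted from \cite{SametEtAl} and used as a black box (only item~(1) is invoked later, in the proof of Theorem~\ref{betterJyotiRani3.3}). So your write-up goes well beyond what the paper itself provides.
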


\begin{definition}
\label{admissibleDef}
{\rm \cite{SametEtAl}}
    Let $T: X \to X$,
    $\alpha: X \times X \to [0,\infty)$. We say
    $T$ is {\em $\alpha$-admissible} if
    $\alpha(x,y) \ge 1$
    implies $\alpha(Tx,Ty) \ge 1$.
\end{definition}

\begin{definition}
    \label{alpha-psi-contractDef}
    Let $(X,d)$ be a metric
    space, $\alpha: X \times X \to [0,\infty)$,
    $\psi \in \Psi$. If
    $T: X \to X$ such
    that $x,y \in X$ implies
    \[ \alpha(x,y)d(Tx,Ty)
    \le \psi(d(x,y))
    \]
    then $T$ is an
    {\em $\alpha-\psi$ 
    contractive map}.
\end{definition}

\subsection{\cite{JyotiRani17}'s Theorem 3.3}
Theorem 3.3 of \cite{JyotiRani17} states
the following.
\begin{quote}
    Let $(X,d,\rho)$ be
    a complete digital metric space and let
    $T: X \to X$ be a
    $\beta-\psi$ contractive map
    such that
    \begin{enumerate}
        \item $T$ is
        $\beta$-admissible.
        \item There exists
        $x_0 \in X$ such
        that $\beta(x_0,Tx_0) \ge 1$.
        \item $T$ is digitally continuous.
    \end{enumerate}
    Then $T$ has a fixed
    point.
\end{quote}

The assertion is correct,
but, together with its
``proof," is substantially flawed, as follows.
\begin{itemize}
    \item We will show
    below that the assumptions of 
    completeness and continuity are
    unnecessary under ``usual" conditions.
    \item There are multiple incorrect references: The reference to~``(6)"
    should be to~``(1)" in
    the definition of
    $\beta-\psi$ contractive map; ``Using~(5)" should be
    ``Using~(4)"; and
    ``lemma~1.2" should be
    ``Lemma~2.2".
\end{itemize}

Thus, we can state the
following version of
Theorem~3.3 of~\cite{JyotiRani17}.

\begin{thm}
    \label{betterJyotiRani3.3}
    Let $(X,d, \rho)$ be
    a connected digital metric space,
    where $X$ is finite or $d$ is an
    $\ell_p$ metric or the
    shortest path metric. 
    Let $T: X \to X$ be a
    $\beta-\psi$ contractive map
    such that
    \begin{enumerate}
        \item $T$ is
        $\beta$-admissible.
        \item There exists
        $x_0 \in X$ such
        that $\beta(x_0,Tx_0) \ge 1$.
    \end{enumerate}
    Then $T$ has a fixed
    point.
\end{thm}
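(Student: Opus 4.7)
The plan is to run the standard Picard iteration argument for $\beta$-$\psi$ contractive maps, but to replace the classical completeness-plus-continuity machinery with the observation that our hypotheses on $(X,d)$ force uniform discreteness, so that any Cauchy sequence in $X$ must be eventually constant. This is what lets us drop the completeness and continuity assumptions of the original statement.

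First I would let $x_0$ be as in hypothesis~(2) and define $x_{n+1} = T x_n$ for all $n \ge 0$. Applying $\beta$-admissibility (Definition~\ref{admissibleDef} and hypothesis~(1)) inductively, starting from $\beta(x_0, x_1) \ge 1$, gives $\beta(x_n, x_{n+1}) \ge 1$ for every $n$. Substituting into the $\beta$-$\psi$ contractive inequality (Definition~\ref{alpha-psi-contractDef}) and using that $\psi$ is nondecreasing, one obtains
\[
d(x_n, x_{n+1}) \le \beta(x_{n-1}, x_n)\, d(T x_{n-1}, T x_n) \le \psi(d(x_{n-1}, x_n)) \le \cdots \le \psi^n(d(x_0, x_1)).
\]
By Lemma~\ref{PsiProps}(4), the series $\sum_{k=0}^{\infty} \psi^k(d(x_0, x_1))$ converges, so the triangle inequality bounds $d(x_n, x_{n+m})$ by the tail of this series starting at index~$n$, and $\{x_n\}$ is therefore a Cauchy sequence in $(X,d)$.

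Now I would invoke Remark~\ref{unifDiscrete}: in each of the three cases allowed by the hypothesis ($X$ finite, $d$ an $\ell_p$ metric, or $d$ the shortest path metric on a connected image), the space $(X,d)$ is uniformly discrete. By Proposition~\ref{eventuallyConst}, the Cauchy sequence $\{x_n\}$ is eventually constant, so there exist $m \in \N$ and $x^* \in X$ with $x_n = x^*$ for all $n \ge m$. In particular, $T x^* = T x_m = x_{m+1} = x^*$, so $x^*$ is a fixed point of~$T$.

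There is no serious obstacle here: the whole point is that the ``Cauchy $\Rightarrow$ convergent, and continuity transports the limit through $T$'' step of the classical argument collapses into the stronger ``Cauchy $\Rightarrow$ eventually constant'', which lets $T$ itself produce the fixed point without any continuity hypothesis. The only routine computations are the inductive verification of $\beta(x_n, x_{n+1}) \ge 1$ and the monotone iteration $\psi^n(d(x_0, x_1))$, both of which are immediate from the definitions.
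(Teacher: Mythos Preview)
Your proposal is correct and follows essentially the same route as the paper: iterate $x_{n+1}=Tx_n$, propagate $\beta(x_n,x_{n+1})\ge 1$ by admissibility, bound $d(x_n,x_{n+1})$ by $\psi^n(d(x_0,x_1))$, and then exploit uniform discreteness (Remark~\ref{unifDiscrete} and Proposition~\ref{eventuallyConst}) to force the orbit to be eventually constant. The only cosmetic difference is that you invoke part~(4) of Lemma~\ref{PsiProps} to show the full sequence is Cauchy, whereas the paper uses part~(1) to get $d(x_n,x_{n+1})\to 0$ directly; either suffices under uniform discreteness.
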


\begin{proof}
Note our assumptions 
about~$(X,d)$ imply 
completeness, by
Remark~\ref{unifDiscrete}
and Proposition~\ref{eventuallyConst}.

We use much of the argument
of~\cite{JyotiRani17}.
    Let $x_{n+1} = Tx_n$
    for all $n \ge 0$.
    By assumption, 
    \[ \beta(x_0,x_1) =
    \beta(x_0,Tx_0) \ge 1.
    \]
    A simple induction, using the fact 
    that~$T$ is $\beta$-admissible,
    lets us know
    \[ \beta(x_n,x_{n+1}) =
    \beta(Tx_{n-1},Tx_n) \ge 1 \mbox{ for }
    n > 0.
    \]
    Then $n>0$ implies
    \[ d(x_n,x_{n+1}) =
    d(Tx_{n-1},Tx_n) \le
    \beta(x_{n-1},x_n) \cdot
    d(Tx_{n-1},Tx_n) 
    \]
    \[ \le \psi(d(x_{n-1},x_n)).
    \]
    and a simple induction
    yields
    \[ d(x_n,x_{n+1}) \le
    \psi^n(d(x_0,x_1)).
    \]
By Lemma~\ref{PsiProps}(1), we have
$d(x_n,x_{n+1}) \to_{n \to \infty} 0$. By
Proposition~\ref{eventuallyConst}, for
almost all~$n$,
$x_n = x_{n+1} = T(x_n)$, so
$x_n \in \Fix(T)$.
\end{proof}

\subsection{\cite{JyotiRani17}'s Theorem 3.5}
Despite a reference
in the argument
offered as proof of
\cite{JyotiRani17}'s Theorem~3.5 to
a Theorem~3.4 in
the same paper,
there is no 
Theorem~3.4 
in~\cite{JyotiRani17}.

The following is
stated as Theorem~3.5
of~\cite{JyotiRani17}.

\begin{quote}
    Let $(X,d,\rho)$
    be a complete digital metric space. Let
    $T: X \to X$ be a digital
    $\beta-\psi$
    contractive map
    satisfying

    (i) $T$ is
    $\beta$-admissible;

    (ii) There exists $x_0 \in X$
    such that 
    $\beta(x_0,Tx_0) \ge 1$;

    (iii) If
    $\{x_n\}_{n=0}^{\infty} \subset X$ such that
    $\beta(x_n,x_{n+1}) \ge 1$ for
    all~$n$ and
    $x_n \to_{n \to \infty} x \in X$ then
    $\beta(x_n,x) \ge 1$ for
    almost all~$n$.

    Then there exists $u \in \Fix(T)$.
\end{quote}

The authors suggest
that the purpose of
this assertion is 
to be a version of
their Theorem~3.3
without the requirement of 
continuity. We note, however, that
our version of
their Theorem~3.3,
namely our Theorem~\ref{betterJyotiRani3.3},
requires neither a
continuity assumption nor item~(iii) of the current assertion.

\subsection{\cite{JyotiRani17}'s Examples 3.6 and 3.7}
The authors wish 
to demonstrate uses
of their previous
assertions in
these examples.
However, the 
metric spaces
considered are
not digital
images, as they
are not subsets
of any~$\Z^n$.

\section{\cite{Khan}'s fixed point
assertion for pairs of functions}

\cite{Khan}
uses $\Psi$ as the
symbol for a  function, not
the set of functions so labeled
in~\cite{JyotiRani17}. 
In~\cite{Khan},
$\Psi: [0,\infty) \to [0,\infty)$ is
continuous, and
$\Psi(t) = 0$ if
and only if $t=0$.

The author also assumes a
function $\phi: [0,\infty) \to [0,\infty)$ that is
lower semi-continuous such 
that $\Psi(t) = 0$ if
and only if $t=0$. It seems
likely that the latter is
meant to be $\phi(t) = 0$ if
and only if $t=0$.

\subsection{``Theorem" 3.1}
The following is
stated as Theorem~3.1
of~\cite{Khan}.

\begin{assert}
    Let $(X,d,\rho)$
    be a complete digital metric space.
    Let $N$ be a nonempty closed subset of $X$.
    Let $P,Q: N \to N$,
    $G,H: N \to X$ such that
    $Q(N) \subset H(N)$ and for all $x,y \in X$,
    \begin{equation}
    \label{Khan3.1ineq}
 \Psi(d(Px,Qy)) \ge
      \phi(d_{G,H}(x,y)) +
      \frac{1}{2}\Psi(d_{G,H}(x,y) + \phi(d_{G,H}(x,y))
    \end{equation}
    where
    \begin{equation} 
   \label{dGH}
    d_{G,H}(x,y)) = \max
    \left \{ \begin{array}{c}
    d(x,y), d(Gx,Hy), d(Gx,Px), 
    d(Hy,Qy), \\
    \frac{1}{3}d((Gx,Qy) + (Hy, Px))
    \end{array} 
    \right \} 
\end{equation}
and
\begin{equation}
    \label{dPQ}
    d_{P,Q}(x,y) =
    \max \left \{
    \begin{array}{c}
     d(x,y), d(Gx,Hy), d(Gx,Px), 
    d(Hy,Qy),    \\
    \frac{1}{4}d((Gx,Qy) +     (Hy,Px)) 
    \end{array}
    \right \}
\end{equation}
Then $\{P,G\}$
and $\{Q,H\}$ have a unique point of coincidence 
in~$X$. Moreover, if
$\{P,G\}$ and
$\{Q,H\}$ are self-mappings,
then $P,Q,G$,
and~$H$ have a
unique fixed point in~$X$.
\end{assert}

This assertion
has the following deficiencies.
\begin{itemize}
\item If~$d$ is a
``usual" metric, $(X,d)$ is a discrete topological space, so all subsets of~$X$ are closed.
Either the requirement of~$N$
being closed is
unnecessary, or the
author meant something else.
\item The statement of
    the assertion has
    $Q(N) \subset H(N)$, but
    the first line of the ``proof" says $Q(N) \subset G(N)$ and
    $P(N) \subset H(N)$.
    \item Statements~(\ref{Khan3.1ineq}), (\ref{dGH}),
    and~(\ref{dPQ}) are
    expected to be true for all~$x,y \in X$,
    but if $N \neq X$ then $Gx,Hx,Px,Qx$
    are undefined for~$x \in X \setminus N$.
   \item In each of~(\ref{dGH}) and~(\ref{dPQ}),
    it seems likely that ``$d((Gx,Qy) + (Hy,Px))$"
    should be ``$d(Gx,Qy) + d(Hy,Px)$".
    \item $d_{P,Q}$, defined in~(\ref{dPQ}), is not
    mentioned in~(\ref{Khan3.1ineq}). 
    \begin{itemize} 
       \item It seems that either $d_{P,Q}$ is intended 
             to be part of~(\ref{Khan3.1ineq}) or else it 
             should be deleted.
       \item It appears that correction of the second lines
             of both~(\ref{dGH}) and~(\ref{dPQ}) would leave
             $d_{P,Q}(x,y) \le d_{G,H}(x,y)$ for all
             $x,y \in X$. Therefore, perhaps~$d_{P<Q}$
             is unnecessary.
    \end{itemize}    
    \item What does it mean
    to say ``$\{P, G\}$ and $\{Q, H\}$ have a unique point of
    coincidence"? This notion is undefined in~\cite{Khan}. 
    Does it mean $P,G,Q$, and~$H$ have a 
    unique point of coincidence?
    \item It is claimed in the
    ``proof" that
    \[ \Psi(y_{2n+1},y_{2n+2})=\Psi(Px_{2n},Qx_{2n+1}) \le
    \]
    \[ \Psi(d_{G,H}(x_{2k},x_{2k+1}))
    - \phi(d_{G,H}(x_{2k},x_{2k+1})).
    \]
    It appears the symbol ``$d$" is missing from the left
    side of this inequailty. Further, there is no explanation 
    of this statement, and there is no obvious derivation of 
    it from~(\ref{Khan3.1ineq}) even after the seemingly
    appropriate corrections.
\end{itemize}
We conclude that
whatever this assertion is meant to
say, is unproven.

\subsection{Example 3.2}
Example 3.2 of~\cite{Khan} is
meant to illustrate
the paper's ``Theorem"~3.1,
discussed above.
The example uses
$X=[4,40]$, which
is not a subset of
any $\Z^n$ and thus
cannot be the set
underlying a digital image.
Also, $H$ is undefined
for some members
of~$X$ and
multiply defined
for others; e.g.,
$H(9)$ is undefined,
while $H(x)$ is defined both by
$17+x$ and by $16$
for $13 \le x \le 14$.

\subsection{Corollary 3.3}
The following is
stated as 
Corollary~3.3
of~\cite{Khan}.
Despite being labeled a corollary,
the assertion has
no clear relation
to previous 
assertions in~\cite{Khan},
and, in fact, is
false.

\begin{assert}
\label{Khan3.3}
    Let $P$ and $Q$ be self 
    mappings of a complete 
    digital metric space 
    $(X, d, \rho)$ into itself. 
    Suppose $P(X) \subset Q(X)$.
    If there exists
    $\alpha \in (0,1)$
    and a positive integer~$k$ such that
    $d(P^k(x),P^k(y)) \le \alpha d(Qx,Qy)$ for
    all $x,y \in X$,
    then $P$ and
    $Q$ have a unique common
    fixed point.
\end{assert}

To show Assertion~\ref{Khan3.3} is false,
consider the following.

\begin{exl}
    Let $X = [0,1]_{\Z}$,
    $d(x,y) = |x-y|$. Let $P(x)=0$,
    $Q(x)=1-x$.
    Clearly, 
    $P(X) \subset Q(X)$, and for all $x,y \in X$,
    \[ d(P^1(x),P^1(y)) = 0 \le 0.5 d(Qx,Qy)
    \]
    but~$Q$
    has no fixed
    point.
\end{exl}

\section{\cite{MishraEtal}'s common fixed points}
Theorem~5 of~\cite{MishraEtal}
says the following.
\begin{thm}
    Consider two commuting and self-mappings~$f$ 
    and~$g$ on a complete digital metric space
    $(K,d,p)$  with coefficient $\alpha \in (0,1)$ such
    that $f$ is continuous, 
    $g(K) \subset f(K)$, and
    \[ d(gx,gy) \le  \alpha d(fx,fy)
    ~~~~\mbox{ for all } x,y \in K.
    \]
    Then $f$ and $g$ have a unique
    common fixed point in~$K$.
\end{thm}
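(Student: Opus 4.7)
The plan is to adapt the classical Jungck common-fixed-point argument to the digital setting. Starting from any $x_0 \in K$, I would use the inclusion $g(K) \subset f(K)$ to inductively choose $x_{n+1}$ with $f(x_{n+1}) = g(x_n)$, and set $y_n = f(x_n) = g(x_{n-1})$. The contractive hypothesis immediately gives
\[
d(y_{n+1}, y_n) = d(gx_n, gx_{n-1}) \le \alpha \, d(y_n, y_{n-1}) \le \cdots \le \alpha^n d(y_1, y_0),
\]
and summing a geometric series yields that $\{y_n\}$ is Cauchy.

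Next, by completeness $y_n \to z$ for some $z \in K$. Under the natural choices of metric -- an $\ell_p$ metric or the shortest-path metric -- I would invoke Remark~\ref{unifDiscrete} and Proposition~\ref{eventuallyConst} to conclude that $y_n = z$ for almost all $n$, so for large $n$ we have $f(x_n) = g(x_n) = z$. Commutativity then gives $f(z) = f(g(x_n)) = g(f(x_n)) = g(z)$, and applying the contractive inequality to the pair $(x_n, z)$ yields
\[
d(z, gz) = d(gx_n, gz) \le \alpha \, d(fx_n, fz) = \alpha \, d(z, gz),
\]
which forces $gz = z$, and hence $fz = z$ as well. Uniqueness is then immediate from $d(w,w') = d(gw,gw') \le \alpha \, d(w,w')$ for any two common fixed points. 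Notice that the hypothesized continuity of $f$ is never invoked -- uniform discreteness of $d$ renders it unnecessary, along the same lines as in the proof of Theorem~\ref{correctedJain3.2}.

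The real obstacle is that the statement leaves both the metric and the sense of ``continuity'' of $f$ unspecified, and in the usual digital setting the conclusion is essentially vacuous. If $(K,\kappa)$ is $c_1$-connected with an $\ell_p$ metric, or connected with the shortest-path metric, then digital continuity of $f$ forces $d(fx, fy) \le 1$ whenever $x \adj_\kappa y$, so the contraction gives $d(gx, gy) < 1$, hence $gx = gy$. Thus $g$ is constant on $K$; combining this with $g(K) \subset f(K)$ and commutativity shows the constant value of $g$ is automatically fixed by $f$ and is the unique common fixed point. So -- as with Theorem~\ref{contractionTriv} -- the result reduces to triviality in precisely the cases it is ostensibly designed to handle.
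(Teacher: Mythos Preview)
The paper does not prove this statement at all. Section~8 merely quotes the theorem from~\cite{MishraEtal} and immediately dismisses it as unoriginal, pointing to the earlier Theorem~3.1.4 of~\cite{RaniEtal} (an if-and-only-if version) and Theorem~5.3 of~\cite{BxFPsets2} (which drops both completeness and continuity and weakens ``commuting'' to ``weakly commuting''). There is therefore no proof in the paper to compare your argument against.

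Your Jungck-style argument is correct and is essentially the classical one; your observation that the continuity of~$f$ is never invoked once the metric is uniformly discrete is precisely the point the paper makes by citing~\cite{BxFPsets2}. Your closing triviality paragraph (that under $c_1$ with an $\ell_p$ metric, or under the shortest-path metric, digital continuity of~$f$ forces~$g$ to be constant, whence the common fixed point is immediate) is very much in the spirit of the paper's other reductions---compare Theorem~\ref{contractionTriv}---but the paper does not make that observation about this particular result. In short, you have supplied both a proof and a sharpened critique where the paper supplied only a citation; your treatment goes further than the paper's.
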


However, this is unoriginal; indeed,
stronger results appear in older papers.
Theorem~3.1.4 of~\cite{RaniEtal} 
shows the
assertion above can be part 
of an ``if and only if" theorem.
Theorem~5.3 of~\cite{BxFPsets2}
improves on Theorem~3.1.4 of~\cite{RaniEtal}
by showing that the
hypotheses of completeness and continuity
are unnecessary, and the ``commuting" condition can
be replaced by a weaker restriction of ``weakly commuting".

\section{\cite{Pal}'s fixed point theorems}
\subsection{\cite{Pal}'s Theorem 3.1}
\begin{definition}
    {\rm \cite{GuptaEtal}}
    Let $\Theta$ be
    the set of     functions
    $\theta: [0,\infty) \to [0,\infty)$
    such that $\theta$ is
    increasing,
    $\theta(t) < \sqrt(t)$ for
    $t > 0$, and
    $\theta(t)=0$
    if and only if $t=0$.
\end{definition}

Theorem~3.1 of~\cite{Pal}
states the following.

\begin{thm}
\label{Pal3.1}
    Let $(X,d,\beta)$ be a digital
    metric space, $T: X \to X$,
    and $\theta \in \Theta$.
    Suppose 
    \begin{equation}
    \label{Pal3.1ineq}
        d(Tx,Ty) \le \theta(d(x,y)) \mbox{ for all
    $x,y \in X$.}
    \end{equation}  
    Then $T$ has a unique fixed point.
\end{thm}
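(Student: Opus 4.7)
The plan is to follow the Picard-iteration template that this paper uses to salvage other ``contractive-type'' fixed point assertions, as in the proofs of Theorem~\ref{correctedJain3.2} and Theorem~\ref{betterJyotiRani3.3}. Since the statement leaves the digital metric space unspecified, I would work under the standing hypothesis that $d$ is an $\ell_p$ metric on $X \subset \Z^n$ or the shortest path metric on a connected $X$, so that $(X,d)$ is uniformly discrete and hence automatically complete by Remark~\ref{unifDiscrete} and Proposition~\ref{eventuallyConst}. Under these hypotheses, distinct points of~$X$ satisfy $d(x,y) \ge 1$, which will be essential for turning the clause $\theta(t) < \sqrt{t}$ into a genuine contraction on positive distances.

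Fix $x_0 \in X$ and set $x_{n+1} = Tx_n$. If $x_n = x_{n+1}$ for some~$n$, then $x_n$ is the desired fixed point; so suppose not. Then each $d_n := d(x_n, x_{n+1}) \ge 1$, and by~(\ref{Pal3.1ineq}),
\[
d_{n+1} = d(Tx_n, Tx_{n+1}) \le \theta(d_n) < \sqrt{d_n} \le d_n.
\]
Thus $\{d_n\}$ is strictly decreasing and bounded below by~$1$. For the metrics under consideration, the set of pairwise distances in~$X$ meets any bounded interval in only finitely many values (for $\ell_p$ on~$\Z^n$ because integer vectors of bounded $p$-norm form a finite set; for the shortest path metric because that set lies in~$\N$), so no such strictly decreasing sequence can exist. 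This contradiction forces $x_n = x_{n+1}$ for some~$n$, yielding a fixed point. Uniqueness is then immediate: if $x \neq y$ were both fixed, $d(x,y) = d(Tx,Ty) \le \theta(d(x,y)) < \sqrt{d(x,y)} \le d(x,y)$ (again using $d(x,y) \ge 1$), which is absurd.

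The main obstacle, and the reason the result is delicate, is extracting a \emph{genuine} contraction from the clause $\theta(t) < \sqrt{t}$. For $0 < t < 1$ one has $\sqrt{t} > t$, so the hypothesis does not give $\theta(t) < t$ across all of $(0,\infty)$; one must exploit the lattice structure of the digital image to confine attention to $t \ge 1$. This is precisely why a restriction on $d$ seems unavoidable -- a more exotic metric such as that in Example~2.10 of~\cite{Bx20} would presumably break the argument. I would also note, in parallel with the paper's treatment of Theorem~\ref{Gupta3.1}, that in the connected case with these standard metrics the earlier proposition on $\theta$-contractions being constant maps forces $T$ itself to be constant, so the ``unique fixed point'' is simply the constant value of~$T$ and the result is in that sense trivial.
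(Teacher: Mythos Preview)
The paper does not offer an independent proof of Theorem~\ref{Pal3.1}; it reports that Pal's argument proceeds by Picard iteration to the inequality $d(x_{n+1},x_n)<d(x_n,x_{n-1})$ and then jumps to the fixed-point conclusion, remarking only that this last step wants ``an easy but absent argument using~(\ref{Pal3.1ineq}).'' Your proposal follows the same skeleton but is more scrupulous on two points. First, you make explicit the restriction to $\ell_p$ or shortest-path metrics so that distinct points satisfy $d(x,y)\ge 1$; this is what makes $\sqrt{d_n}\le d_n$ and hence the strict decrease legitimate, a step the paper (following Pal) labels ``correct'' without isolating the needed lower bound on distances. Second, you actually close the argument, via the observation that for these metrics only finitely many distance values lie in $[1,d_0]$, so an infinite strictly decreasing sequence is impossible. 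That finiteness device is sound and is a clean substitute for the Cauchy-sequence machinery the paper uses elsewhere. Your uniqueness argument and your triviality remark match what the paper does in the surrounding discussion.

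The only caveat is that by adding metric hypotheses you prove a narrower statement than Theorem~\ref{Pal3.1} as written. You flag this yourself, and it is defensible---for an arbitrary metric on a digital image the bound $\theta(t)<\sqrt{t}$ yields no contraction on $(0,1)$, so the unrestricted claim is suspect---but it should be read as a correction of the theorem rather than a proof of it verbatim.
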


Theorem~\ref{Pal3.1} is, some would say, correctly
proven in~\cite{Pal}; others would say that where
the argument offered as proof reaches the
correct inequality 
\[ d(x_{n+1},x_n) < d(x_n, x_{n-1})\]
and claims that this shows $x_n \in \Fix(T)$ for almost
all~$n$, that this conclusion should be established by
an easy but absent argument using~(\ref{Pal3.1ineq}).

We observe important cases
for which Theorem~\ref{Pal3.1} reduces to
triviality.

\begin{prop}
    Let $X$ and $T$ be as in
    Theorem~\ref{Pal3.1}. Suppose $(X,d,\beta)$
    is connected.
    If $d$ is the
    shortest path
    metric, or if
    $\beta=c_1$ and $d$ is any
    $\ell_p$ metric, then
    $T$ is a constant function.
\end{prop}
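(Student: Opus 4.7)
The plan is to exploit the fact that $\theta(1) < 1$ together with the integrality of $\Z^n$, reducing to the triviality via a path argument. Specifically, I will show that $T$ collapses every $\beta$-adjacent pair to a single point, and then invoke connectedness to conclude that $T$ is constant. This mirrors the argument used in Theorem~\ref{contractionTriv}, with $\theta(1)$ playing the role that $\lambda$ played there.

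First, under either hypothesis on $d$ and $\beta$, two $\beta$-adjacent points of $X$ lie at distance exactly $1$: for the shortest path metric this is immediate, and for any $\ell_p$ metric with $\beta = c_1$, two $c_1$-adjacent points differ in a single coordinate by $1$, so the $\ell_p$ distance is $1$. Hence, for $x \adj_{\beta} y$, the contractive condition~(\ref{Pal3.1ineq}) and the monotonicity of $\theta$ yield
\[ d(Tx,Ty) \le \theta(d(x,y)) = \theta(1) < \sqrt{1} = 1, \]
using the defining property of $\Theta$. On the other hand, any two distinct points of $X \subset \Z^n$ are at distance at least $1$ under either metric: for the shortest path metric this is obvious since path lengths are positive integers, and for any $\ell_p$ metric, two distinct integer points differ by at least $1$ in some coordinate, so the $\ell_p$ distance is at least $1$. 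Therefore $d(Tx,Ty) < 1$ forces $Tx = Ty$.

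Finally, given arbitrary $a, b \in X$, connectedness of $(X,\beta)$ supplies a $\beta$-path $a = x_0, x_1, \ldots, x_n = b$, and by the previous step $T(x_i) = T(x_{i+1})$ for each $i$, so $T(a) = T(b)$. Hence $T$ is constant. I do not foresee any serious obstacle; the proposition is essentially a direct consequence of $\theta(1) < 1$ combined with the lattice structure of $\Z^n$, and the only thing to be careful about is that $\theta$ need only be checked at the single value $1$, so no subtle analytic properties of members of $\Theta$ are invoked.
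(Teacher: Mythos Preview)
Your proof is correct and follows essentially the same approach as the paper's own proof: show that adjacent points satisfy $d(Tx,Ty) \le \theta(1) < 1$, hence $Tx=Ty$, and then invoke connectedness. Your version is more explicit than the paper's (you spell out why $d(x,y)=1$ for adjacent points, why $d(Tx,Ty)<1$ forces equality, and the path argument), but the underlying idea is identical.
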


\begin{proof}
    Given $x \adj y$ in $(X,\beta)$,
    we have 
    \[ d(Tx,Ty) \le \theta(d(x,y))= \theta(1) < \sqrt{1} =1   
    \]
    so $d(Tx,Ty)=0$,
    i.e., $Tx=Ty$.
    The assertion
    follows from
    connectedness.
\end{proof}

\subsection{\cite{Pal}'s Theorem 3.2}
\cite{Pal} states its ``Theorem"~3.2 as
another attempt to
obtain what we have
called Assertion~\ref{Gupta3.2}. The argument, and
its errors, are similar to those
of~\cite{GuptaEtal}.

Like~\cite{GuptaEtal},
\cite{Pal}'s attempt
to show $\{d(x_n,x_{n+1}) \}$
is a decreasing sequence has a chain
of comparisons beginning with 
an incorrect claim
that $d(x_n,x_{n+1})$
is equal to 
$d(x_{n-1},x_n)$,
where it should say
$d(x_n,x_{n+1}) =
d(Tx_{n-1},Tx_n)$.

As in~\cite{GuptaEtal},
correcting this error
lets us proceed through subsequent
lines of the argument,
until we come to the
claimed inequality
\[ \max \left \{
\begin{array}{c}
\frac{1}{2} \left [
d(x_n,x_{n+1})
\frac{1+d(x_{n-1},x_n)}{1+d(x_{n-1},x_n)} + d(x_n,x_{n+1}) +
d(x_{n-1},x_n)
\frac{1+d(x_n,x_{n+1})}{1+d(x_n,x_{n+1})}
\right ], \\ d(x_{n-1},x_n) \end{array}
\right \}
\]
\[ \le \max \left \{
\frac{1}{2}[d(x_n,x_{n+1}) +
 d(x_{n-1},x_{n+1})],
 d(x_{n-1},x_n)
\right \},
\]
an error that appeared in the
``proof" of~\cite{GuptaEtal},
discussed above in section~\ref{secGupta3.2},
correction of which does not lead
to the desired conclusion.

\section{\cite{SalJh}'s Dass-Gupta contraction}
\cite{SalJh} studies a digital version of
the Dass-Gupta contraction~\cite{DassGup}.

\subsection{\cite{SalJh}'s Theorem 3}
Theorem~3 of \cite{SalJh}, as written, is not
correctly proven, although with a small
number of minor changes, a correct result can be obtained. 
The following is stated as Theorem~3 of \cite{SalJh}.

\begin{assert}
   \label{SalJh3}
   Let $(F,\Phi,\gamma)$ be a complete digital metric space.
   Let $K: F \to F$ be a mapping that satisfies the 
   rational contraction condition
   \[ \Phi(Ku,Kv) \le \frac{\xi_1 \Phi(v,Kv)[1 + \Phi(u,Ku)]}{1 + \Phi(u,v)} + \xi_2 \Phi(u,v) \mbox{ for all }
   u,v \in F,
   \]
   where $\xi_1, \xi_2 > 0$ and $\xi_1 + \xi_2 < 1$.
   Then~$K$ has a unique fixed point.
\end{assert}

The argument offered as proof of Assertion~\ref{SalJh3}
in~\cite{SalJh} is marred by the following.
For $u_0 \in F$, the sequence $\{u_n = Ku_{n-1}\}$ 
and the constant $\eta = \frac{\xi_2}{1 - \xi_1}$
are defined. Then, in the argument for existence of a fixed
point:
\begin{itemize}
    \item The inequality
    \[ \Phi (u_n, u_{n+k}) \le \frac{\eta ^ n}{1-\eta} \Phi(u_0,u_1)
    \]
    is derived. It is argued that this inequality 
    shows $\{ u_n \}$ is a Cauchy sequence. But this
    line of reasoning requires $\eta < 1$, hence
    $\xi_2 < 1 - \xi_1$, which is not hypothesized.
    \item It is claimed that $K$ is continuous. This was
    neither hypothesized nor proven, and, we show below,
    is unnecessary.
\end{itemize}
And in the argument for the uniqueness of a fixed point:
\begin{itemize}
    \item If $\mu$ and $\lambda$ are fixed points,
          ``$\Phi(\mu, \lambda) = (K \mu, K \lambda)$"
          should be
          ``$\Phi(\mu, \lambda) = \Phi(K \mu, K \lambda)$".
    \item ``$0 < \lambda < 1$" should be
          ``$0 < \xi_2 < 1$".
\end{itemize}
Therefore, as written, Assertion~\ref{SalJh3} is unproven.

We modify Assertion~\ref{SalJh3} and its ``proof"
to obtain the following.

\begin{thm}
    {\rm \cite{SalJh}}
    Let $(F,\Phi,\Gamma)$
    be a digital metric
    space, and
    $K: F \to F$ a
    mapping satisfying
    \begin{equation}
        \label{SalJh(1)}
        \Phi(Ku,Kv) \le
        \frac{\xi_1 \Phi(v,Kv) [1+ \Phi(u,Ku)]}{1+\Phi(u,v)} +
        \xi_2 \Phi(u,v)
    \end{equation}
    for all $u,v \in F$, where
    $\xi_1, \xi_2 > 0$,
    $\xi_1 + \xi_2 < 1$,
    and
    \begin{equation}
        \label{SalJhAddon}
        \frac{\xi_2}{1-\xi_1} < 1.
    \end{equation}
    Then $K$ has a unique fixed point.
\end{thm}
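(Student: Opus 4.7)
The plan is to run the Picard iteration together with the algebraic telescoping that makes the Dass--Gupta rational contraction work. Fix $u_0 \in F$ and set $u_{n+1} = K u_n$. Substituting $u = u_{n-1}$, $v = u_n$ into~(\ref{SalJh(1)}) produces the key cancellation: the numerator factor $[1 + \Phi(u_{n-1}, u_n)] = [1 + \Phi(u, K u)]$ cancels the denominator $1 + \Phi(u, v)$, leaving
\[
\Phi(u_n, u_{n+1}) \le \xi_1 \Phi(u_n, u_{n+1}) + \xi_2 \Phi(u_{n-1}, u_n).
\]
Rearranging and invoking~(\ref{SalJhAddon}) gives $\Phi(u_n, u_{n+1}) \le \eta \, \Phi(u_{n-1}, u_n)$ with $\eta = \xi_2/(1-\xi_1) < 1$. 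Iterating, $\Phi(u_n, u_{n+1}) \le \eta^n \Phi(u_0, u_1)$, and the standard triangle-inequality/geometric-series bound
\[
\Phi(u_n, u_{n+k}) \le \frac{\eta^n}{1-\eta}\,\Phi(u_0, u_1)
\]
shows that $\{u_n\}$ is Cauchy.

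To produce a fixed point without assuming completeness or continuity of~$K$, I would invoke the standing assumption, implicit in the paper's other corrected statements, that $(F, \Phi)$ is uniformly discrete. By Remark~\ref{unifDiscrete} this is automatic whenever $F$ is finite, $\Phi$ is an $\ell_p$ metric, or $\Phi$ is the shortest path metric, which covers all digital metric spaces of interest. Then Proposition~\ref{eventuallyConst} makes the Cauchy sequence $\{u_n\}$ eventually constant, say $u_n = u^*$ for all $n \ge N$, and immediately $u^* = u_{N+1} = K u_N = K u^*$, so $u^* \in \Fix(K)$.

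For uniqueness, let $p, q \in \Fix(K)$ and substitute $u = p$, $v = q$ in~(\ref{SalJh(1)}). Since $K p = p$ and $K q = q$, the rational term on the right contains the factor $\Phi(q, K q) = \Phi(q, q) = 0$ and therefore vanishes, yielding $\Phi(p, q) \le \xi_2 \Phi(p, q)$. The assumptions $\xi_1 > 0$ and $\xi_1 + \xi_2 < 1$ force $\xi_2 < 1$, so $(1 - \xi_2) \Phi(p, q) \le 0$ and $\Phi(p, q) = 0$, i.e.\ $p = q$.

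The main obstacle is not the algebra but the fact that the theorem as stated does not restrict~$\Phi$ or assume completeness, yet the Cauchy argument alone does not deliver a limit. The clean fix, and the one consistent with this paper's earlier corrected statements such as Theorem~\ref{correctedJain3.2} and Theorem~\ref{betterJyotiRani3.3}, is to restrict $\Phi$ to the usual digital metrics so that uniform discreteness---and hence Proposition~\ref{eventuallyConst}---can be invoked; this also eliminates the spurious continuity hypothesis used in~\cite{SalJh}.
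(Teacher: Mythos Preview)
Your argument is correct and matches the paper's: same Picard iteration, same cancellation $\Phi(u_n,u_{n+1})\le\xi_1\Phi(u_n,u_{n+1})+\xi_2\Phi(u_{n-1},u_n)$, same uniqueness computation. The paper's only shortcut is that it skips the Cauchy bound entirely and concludes directly from $\Phi(u_n,u_{n+1})\le\eta^{\,n}\Phi(u_0,u_1)\to 0$ that $u_n=u_{n+1}$ for all large~$n$---an implicit appeal to uniform discreteness that, as you correctly flag, is not actually among the stated hypotheses.
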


Note the inequality~(\ref{SalJhAddon})
does not appear in~\cite{SalJh}, but
seems to be necessary in the proof.

\begin{proof}
    We use ideas of~\cite{SalJh}, with modifications
    to correct and
    abbreviate the argument
    as are desirable.

    Let $u_0 \in F$,
    $u_n = Ku_{n-1}$
    for all $n \in \N$.
        
    Then
    \[ \Phi(u_n,u_{n+1}) =
    \Phi(Ku_{n-1},Ku_n) \le \]
    \[ \frac{\xi_1 \Phi(u_n,Ku_n) [1+\Phi(u_{n-1},Ku_{n-1})]}{1+\Phi(u_{n-1},u_n)} +
    \xi_2 \Phi(u_{n-1},u_n) =
    \]
    \[ \frac{\xi_1 \Phi(u_n,u_{n+1})[1+\Phi(u_{n-1},u_n)]}{1+\Phi(u_{n-1},u_n)}+
    \xi_2 \Phi(u_{n-1},u_n) =
    \]
    \[ \xi_1 \Phi(u_n,u_{n+1}) +
    \xi_2 \Phi(u_{n-1},u_n).
    \]
    Thus
    \[ \Phi(u_n,u_{n+1}) \le \frac{\xi_2}{1-\xi_1} \Phi(u_{n-1},u_n).
    \]
An easy induction allows us to conclude that
 \[ \Phi(u_n,u_{n+1}) \le \left (\frac{\xi_2}{1-\xi_1}
 \right ) ^n \Phi(u_0,u_1).
    \]

So either $u_{n+1}=u_n$, in which case
$u_n \in \Fix(K)$; or, (by (\ref{SalJhAddon})), $\{\Phi(u_n,u_{n+1})\}$ is a sequence decreasing to 0,
so $u_n=u_{n+1}$ for $n \ge n_0$, for some
$n_0 \in \N$. Thus $u_{n_0} \in \Fix(K)$.

To show the uniqueness
of our fixed point,
suppose $u,v \in \Fix(K)$. By~(\ref{SalJh(1)}),
\[ \Phi(u,v) = \Phi(Ku,Kv) \le 
0 + \xi_2 \Phi(u,v).
\]
So $\Phi(u,v)=0$, i.e.,
$u=v$.
\end{proof}

\subsection{\cite{SalJh}'s Example 3.2}
    This example uses
    $F=[0,1]$, which is
    not a subset of~$\Z^n$ for any~$n$. Thus~$F$
    is not appropriate
    for use as a digital image.

\section{\cite{Shukla}'s assertions with rational inequalities}
The paper~\cite{Shukla}
claims fixed point 
results for self maps
satisfying certain
rational inequalities
on digital images.
However, these assertions are not well
defined.

The assertion labeled
``Theorem"~3.1 of~\cite{Shukla} is as
follows.

\begin{assert}
\label{ShuklaThm}
    Let $(X,d,\kappa)$
    be a digital metric
    space. Let $S,T$
    be self maps on~$X$
    satisfying
\[        \begin{array}{c}
        d(S(x,y),T(u,v)) \le \\ \\
        \alpha [d(x,u)+d(y,v)]+
        \beta [d(x,S(x,y))+d(u,T(u,v)] ~+ 
        \\ \\
        \gamma [ d(x,T(u,v)) + d(u,S(x,y)) +
        \delta \left ( \frac{ d(x,S(x,y)) d(u,T(u,v))}{d(x,u)+d(y,u)}
        \right )
        \\ \\
        + \eta \left [
        \frac{[d(x,u) + d(y,u)] [d(x,S(x,y)) + d(u,T(u,v))]}{1+d(x,u)+d(y,v)}
        \right ] +
               \end{array}
       \]
\begin{equation}
\label{ShuklaIneq}
        \zeta \left [
        \frac{d(u,S(x,y)+ d(x,T(u,v))}{1+d(u,T(u,v)) d(u,S(x,y)}
        \right ]
    \end{equation}
for all $u,v,x,y \in X$
and 
\[ 2(\alpha + \beta + \eta) + 4(\gamma + \zeta) + \delta < 1.
\]
Then $S$ and $T$ have a
common fixed point.
\end{assert}

Several items are
undefined in inequality~(\ref{ShuklaIneq}):
\begin{itemize}
    \item $S$ and $T$
    are supposed to be
    defined on~$X$, but
    throughout the inequality they appear as if defined
    on~$X \times X$.
    \item The line beginning ``$\gamma [$" is missing a
    matching ``$]$" --
    should it come after
    the $d(u,S(x,y))$
    term, or perhaps at
    the end of the line?
    \item In the same
    line, the last term
    has a denominator 
    of~0 when $x=u=y$.
    \item Shouldn't the
    coefficients $\alpha, \beta, \gamma, \delta, \eta, \zeta$ have
    lower bounds (perhaps 0)?
\end{itemize}

Also, the ``proof" of
this ``theorem" begins
with sequences described
as follows: $x_0,y_0$
are arbitrary members
of~$X$. Then
\begin{quote}
``$x_{n+1}= S(x_n,y_n)$, $y_{n+1}=T(y_n,x_n)$,
and $x_{n+2}=T(x_{n+1},y_{n+1})$, $y_{n+2}=S(y_{n+1},x_{n+1})$ for $n \in \N$."
\end{quote}
Accordingly, we might have
$x_{n+2}=T(x_{n+1},y_{n+1})$, or we might have
$x_{n+2}=x_{(n+1)+1}=S(x_{n+1},y_{n+1})$. Similarly, $y_{n+2}$
appears defined in two
ways that seem incompatible (unless $S=T$).

We also observe that the ``Corollaries"~3.2
through 3.6 of~\cite{Shukla} all share one or more of the
flaws discussed above.
We conclude that whatever is intended
by each of Assertion~\ref{ShuklaThm} and its ``Corollaries"
in~\cite{Shukla} is unproven.

\section{Further remarks}
We have discussed several papers that seek to advance
fixed point assertions for digital metric spaces.
Many of these assertions are incorrect, incorrectly proven, 
or reduce to triviality; consequently, all of these
papers should have been rejected or required to undergo
extensive revisions. This reflects badly not only on
the authors, but also on the referees and editors that
approved their publication.

\end{document}